\documentclass[12pt]{article}

\usepackage[a4paper]{geometry}
\usepackage{caption2}
\usepackage{multirow}
\usepackage{enumitem}
\usepackage{mathrsfs}
\usepackage{makecell}
\usepackage{algorithm,algorithmic}
\usepackage{amsmath}
\usepackage{amssymb}
\usepackage{amsfonts}
\usepackage{graphicx}
\usepackage{epstopdf}
\usepackage{float}
\usepackage{amsthm}
\usepackage[unicode={ture},colorlinks,
            linkcolor=black,
            anchorcolor=blue,
            citecolor=green]{hyperref}
\usepackage{appendix}
\allowdisplaybreaks

\makeatletter
\newenvironment{breakablealgorithm}
  {
   \begin{center}
     \refstepcounter{algorithm}
     \hrule height.8pt depth0pt \kern2pt
     \renewcommand{\caption}[2][\relax]{
       {\raggedright\textbf{\ALG@name~\thealgorithm} ##2\par}%
       \ifx\relax##1\relax 
         \addcontentsline{loa}{algorithm}{\protect\numberline{\thealgorithm}##2}%
       \else 
         \addcontentsline{loa}{algorithm}{\protect\numberline{\thealgorithm}##1}%
       \fi
       \kern2pt\hrule\kern2pt
     }
  }{
     \kern2pt\hrule\relax
   \end{center}
  }
\makeatother

\def\ma{\mathscr{A}}
\def\mb{\mathscr{B}}

\def\mf{\mathscr{F}}
\def\mg{\mathscr{G}}
\def\mh{\mathscr{H}}

\def\mj{\mathscr{J}}

\def\mp{\mathscr{P}}
\def\mr{\mathscr{R}}

\def\le{\leqslant}
\def\ge{\geqslant}
\def\bs{\setminus}

\def\gg{g_{t,p}(n_1,\dots,n_p;k_1,\dots,k_p)}

\def\gr{g_{t,p}(n_1,\dots,n_p;r_1,\dots,r_p)}
\def\fr{\mf_{\boldsymbol{r}}}

\def\fs{\mf_{\boldsymbol{s}}}

\def\dr{\boldsymbol{r}}
\def\ds{\boldsymbol{s}}

\newtheorem{thm}{Theorem}[section]
\newtheorem{lem}[thm]{Lemma}

\newtheorem{cx}[thm]{Conjecture}

\title{Extremal $t$-intersecting families for direct products}
\author{
	Tian Yao \quad Benjian Lv \quad Kaishun Wang\footnote{Corresponding author.  E-mail address: yaotian@mail.bnu.edu.cn(T. Yao),\newline bjlv@bnu.edu.cn(B. Lv),\ wangks@bnu.edu.cn(K. Wang)} \\
	{\footnotesize    \em  Sch. Math. Sci. {\rm \&} Lab. Math. Com. Sys.,
		Beijing Normal University, Beijing, 100875,  China}
}
\date{}

\begin{document}
\maketitle
\begin{abstract}

In this paper, by shifting technique we study $t$-intersecting families for direct products where the ground set is divided into several parts. Assuming the size of each part is sufficiently large, we determine all extremal $t$-intersecting families for direct products. We also prove that every largest $t$-intersecting subfamily of a more general family introduced by Katona is trivial under certain conditions.

\medskip
\noindent {\em AMS classification:} 05D05.

\noindent {\em Key words:} Erd\H{o}s-Ko-Rado Theorem; direct products; $t$-intersecting families; cross $t$-intersecting families; shifting technique.
\end{abstract}
\section{Introduction}

Let $n$ and $k$ be two integers with $0\le k\le n$. For an $n$-element set $X$, denote the set of all subsets and the collection of all $k$-subsets of $X$ by $2^X$ and $\binom{X}{k}$, respectively. Given a positive integer $t$, we say a family $\mf\subset2^{X}$ is \emph{$t$-intersecting} if $|A\cap B|\ge t$ for any $A,B\in\mf$. A $t$-intersecting family is called \emph{trivial} if every element of this family contains a fixed $t$-subset of $X$. When $t=1$, we usually omit $t$. The famous Erd\H{o}s-Ko-Rado theorem \cite{EKR} states that if $\mf\subset\binom{X}{k}$ is $t$-intersecting and $n>n_0(k,t)$,
then
$$|\mf|\le\binom{n-t}{k-t},$$
and the equality holds if and only if $\mf=\left\{F\in\binom{X}{k}:T\subset F\right\}$ for some $T\in\binom{X}{t}$.

It is well-known that the smallest value of $n_0(k,t)$ is $(t+1)(k-t+1)$, which was proved by Frankl \cite{Fn} for $t\ge15$, and confirmed by Wilson \cite{Wn} for all $t$ via the eigenvalue method. In \cite{Fn}, Frankl also put forward a conjecture about the maximum size of a $t$-intersecting subfamily of $\binom{X}{k}$ for $n>2k-t$. This conjecture was proved by Ahlswede and Khachatrian \cite{CEKR}.

The Erd\H{o}s-Ko-Rado theorem has been extended to different mathematical objects, such as vector spaces \cite{VEKR,CVEKR}, attenuated spaces \cite{AEKR2}, 
permutation groups \cite{PEKR3}, 
$2$-transitive groups \cite{2EKR1}, 
labeled sets \cite{LSEKR2} and partition sets \cite{PSEKR1}.

In \cite{DEKR}, Frankl studied intersecting families for direct products. For convenience, set $X=[n]:=\{1,\dots, n\}$ in the following. Let $p,n_1,\dots,n_p$ be positive integers such that $n=n_1+\cdots+n_p$. Then $X$ can be partitioned into $p$ parts $X_1,X_2,\dots,X_p$ where
$$X_1=[n_1],\ X_i=\left[\sum_{j\le i}n_j\right]\setminus\left[\sum_{j\le i-1}n_j\right],\quad i=2,\dots,p.$$ 
For positive integers $k_i\in [n_i]$ with $k=k_1+\cdots+k_p$, write
$$\mh_1:=\binom{X_1,\dots,X_p}{k_1,\dots,k_p}=\left\{F\in\binom{X}{k}:|F\cap X_i|=k_i,\ i=1,\dots,p\right\}.$$
Observe that $|\mh_1|=\prod_{j\in[p]}\binom{n_j}{k_j}$. For each $x\in X_l$, the size of $\{A\in\mh_1:x\in A\}$ is ${k_l}|\mh_1|/{n_l}$.
Frankl gave the maximum size of an intersecting subfamily of $\mh_1$ by the eigenvalue method.
\begin{thm}\rm{(\cite{DEKR})}\label{F}
	Suppose $\mf\subset\mh_1$ is an intersecting family and $n_i\ge2k_i$ for $i=1,\dots,p$. Then
	$$\dfrac{|\mf|}{|\mh_1|}\le\max_{i\in[p]}\dfrac{k_i}{n_i}.$$
\end{thm}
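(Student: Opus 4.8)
The plan is to apply Hoffman's ratio (Delsarte--Hoffman) bound to the \emph{disjointness graph} $G$ whose vertex set is $\mh_1$ and in which $A,B\in\mh_1$ are adjacent precisely when $A\cap B=\emptyset$. An intersecting subfamily of $\mh_1$ is exactly an independent set (coclique) of $G$, so the theorem amounts to bounding $\alpha(G)/|\mh_1|$. Since $A\cap B=\emptyset$ if and only if $(A\cap X_i)\cap(B\cap X_i)=\emptyset$ for every $i\in[p]$, the map $F\mapsto(F\cap X_1,\dots,F\cap X_p)$ identifies $G$ with the tensor (categorical) product $K(n_1,k_1)\times\cdots\times K(n_p,k_p)$ of the Kneser graphs $K(n_i,k_i)$ on $\binom{X_i}{k_i}$. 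In particular $G$ is regular of degree $d=\prod_{i\in[p]}\binom{n_i-k_i}{k_i}$.

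First I would recall the spectrum of the Kneser graph: $K(n_i,k_i)$ has eigenvalues $\lambda_i^{(j)}=(-1)^j\binom{n_i-k_i-j}{k_i-j}$ for $j=0,1,\dots,k_i$, with $\lambda_i^{(0)}=\binom{n_i-k_i}{k_i}$ the degree. Since the adjacency matrix of a tensor product is the Kronecker product of the factors' adjacency matrices, the eigenvalues of $G$ are exactly the products $\prod_{i\in[p]}\lambda_i^{(j_i)}$ over all choices $(j_1,\dots,j_p)$. Hoffman's bound then gives
$$\frac{|\mf|}{|\mh_1|}\le\frac{-\lambda_{\min}}{d-\lambda_{\min}},$$
where $\lambda_{\min}$ is the smallest eigenvalue of $G$, so everything reduces to identifying $\lambda_{\min}$.

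This identification is the crux. Using the hypothesis $n_i\ge2k_i$ I would first show that the absolute values $a^{(i)}_j=\binom{n_i-k_i-j}{k_i-j}$ are non-increasing in $j$, since $a^{(i)}_j/a^{(i)}_{j+1}=(n_i-k_i-j)/(k_i-j)\ge1$; hence the degree $\lambda_i^{(0)}=d_i$ is the largest eigenvalue of each factor in absolute value, and the most negative eigenvalue of the factor is $\lambda_i^{(1)}=-\binom{n_i-k_i-1}{k_i-1}$. A product $\prod_i\lambda_i^{(j_i)}$ is negative only if an odd number of factors are negative, and to make it as negative as possible one keeps the remaining factors at the dominant value $d_i$ and takes each chosen negative factor at $\lambda_i^{(1)}$. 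Because $\lambda_i^{(1)}/d_i=-k_i/(n_i-k_i)$ has absolute value at most $1$ (again by $n_i\ge2k_i$), turning more than one factor negative can only shrink the product, so the minimum is attained with a single negative factor:
$$\lambda_{\min}=-\,d\cdot\max_{l\in[p]}\frac{k_l}{n_l-k_l}.$$
Writing $m=\max_l k_l/(n_l-k_l)$ and substituting into Hoffman's bound yields $-\lambda_{\min}/(d-\lambda_{\min})=m/(1+m)$, which (as both $k/n$ and $k/(n-k)$ increase with the ratio $k/n$) equals $\max_{i\in[p]}k_i/n_i$, the desired inequality.

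The main obstacle is the last step: rigorously ruling out that some configuration with several negative factors, or with a negative factor at $\lambda_i^{(j)}$ for $j\ge3$, produces a more negative eigenvalue. The monotonicity of $a^{(i)}_j$ and the bound $|\lambda_i^{(1)}|\le d_i$ supplied by $n_i\ge2k_i$ are exactly what close this gap; the boundary case $n_i=2k_i$, where $K(n_i,k_i)$ is a perfect matching with eigenvalues $\pm1$, must be checked separately but causes no trouble since there $k_i/(n_i-k_i)=1$.
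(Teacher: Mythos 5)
Your proof is correct and takes essentially the same route as the source of this statement: the paper does not reprove Theorem \ref{F} but quotes it from Frankl's paper, noting it was obtained ``by the eigenvalue method,'' which is precisely your argument — the Hoffman (ratio) bound applied to the disjointness graph realized as the tensor product of Kneser graphs. Your identification of $\lambda_{\min}$ is sound: under $n_i\ge 2k_i$ the monotonicity of $\binom{n_i-k_i-j}{k_i-j}$ in $j$ and the fact that each ratio $k_i/(n_i-k_i)\le 1$ force the minimum eigenvalue to come from a single factor at $j=1$, giving $\lambda_{\min}=-d\cdot\max_{l}k_l/(n_l-k_l)$ and hence the bound $\max_i k_i/n_i$.
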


Recently, Kwan et al. \cite{DHM} determined the maximum size of a non-trivially intersecting subfamily of $\mh_1$ when $n_1,\dots,n_p$ are sufficiently large and so disproved a conjecture of Alon and Katona, which was also mentioned in \cite{MDEKR}.
The maximum sum of sizes of cross intersecting subfamilies of $\mh_1$ was determined by Kong et al. \cite{CDSUM}. Ahlswede et al. \cite{LDEKR} completely determined the maximum size of a $(t_1,\dots,t_p)$-intersecting subfamily of $\mh_1$, in which any two sets intersect in at least $t_i$ elements of $X_i$ for some $i\in[p]$. 

In this paper, we study $t$-intersecting subfamilies of $\mh_1$. One of our main results is the following.
\begin{thm}\label{T1}
	Suppose $\mf\subset\mh_1$ is a $t$-intersecting family. If $n_i>2(t+1)pk_i^2$ for any $i\in[p]$, then
	\begin{equation*}\label{EE}
	|\mf|\le\max_{\substack{t_1+\cdots+t_p=t\\ t_1,\dots,t_p\in\mathbb{N}}}\prod_{i\in[p]}\binom{n_i-t_i}{k_i-t_i}.
	\end{equation*}
	Moreover,
	the equality holds if and only if
	$$\mf=\{F\in\mh_1: T\subset F\},$$
	where $T\in\binom{X}{t}$ such that
		\begin{equation}\label{ES2}
	\dfrac{k_i-|T\cap X_i|}{n_i-|T\cap X_i|}\le\dfrac{k_j-|T\cap X_j|+1}{n_j-|T\cap X_j|+1}
\end{equation}
	for any $i\in[p]$ whenever $|T\cap X_j|\ge1$.
 \end{thm}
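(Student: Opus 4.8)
The plan is to prove the bound by the shifting technique, using only shifts \emph{internal to a single part} (these are the ones preserving $\mh_1$), and then to optimize the resulting product and settle equality. First I would define, for $a<b$ lying in the same part $X_l$, the shift $S_{ab}$ on a family $\mf\subseteq\mh_1$ in the usual way, and verify the three standard facts: $S_{ab}(\mf)\subseteq\mh_1$ because $|F\cap X_l|$ is unchanged; $|S_{ab}(\mf)|=|\mf|$; and $S_{ab}$ preserves the $t$-intersecting property. Iterating over all admissible pairs yields a \emph{compressed} family $\mf'$ (invariant under every internal $S_{ab}$) with $|\mf'|=|\mf|$, so for the upper bound it suffices to treat $\mf'$.

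The key structural step is a localization lemma: letting $J_l$ be the initial segment of $X_l$ of size at most $2k_l$ and $W=\bigcup_l J_l$, I would show by the standard compression argument that every pair $A,B\in\mf'$ already satisfies $|A\cap B\cap W|\ge t$ (if a witnessing common element lay outside $W$, compression could pull it inside, which together with $t$-intersection forces the required elements into $W$). Hence the traces $\tau=F\cap W$, $F\in\mf'$, form a $t$-intersecting family on the bounded ground set $W$, and the number of $F\in\mh_1$ realizing a prescribed trace $\tau$ is $\prod_l\binom{n_l-|J_l|}{k_l-|\tau\cap X_l|}$. Summing these counts over the traces gives
\[
|\mf'|\le\sum_{\tau}\ \prod_{l}\binom{n_l-|J_l|}{k_l-|\tau\cap X_l|}.
\]
Since $W$ is bounded there are only boundedly many traces, and the $t$-intersecting condition forces all traces of minimum size $t$ to coincide in a single fixed $t$-set $T$; every other trace has at least one extra element and hence contributes a factor of order $1/n_l$ less. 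The main obstacle is the resulting \emph{second-order} comparison: both the slack $\prod_l\binom{n_l-t_l}{k_l-t_l}-\prod_l\binom{n_l-|J_l|}{k_l-t_l}$ available to the full trivial family and the total contribution of the non-minimal traces are of the same order in the $n_l$, and one must show the former dominates. This is exactly where the hypothesis $n_i>2(t+1)pk_i^2$ enters, yielding $|\mf'|\le\max_{t_1+\cdots+t_p=t}\prod_l\binom{n_l-t_l}{k_l-t_l}$ with equality only when $\mf'$ is trivial.

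It remains to optimize $\Phi(t_1,\dots,t_p)=\prod_l\binom{n_l-t_l}{k_l-t_l}$ over $t_1+\cdots+t_p=t$ with $t_l\in\mathbb{N}$, and to pin down equality. Transferring one unit from part $j$ (with $t_j\ge1$) to part $i$ multiplies $\Phi$ by
\[
\frac{k_i-t_i}{n_i-t_i}\cdot\frac{n_j-t_j+1}{k_j-t_j+1},
\]
and since the factor $\tfrac{k_l-t_l}{n_l-t_l}$ is strictly decreasing in $t_l$, no chain of transfers can improve a distribution for which every single transfer is non-improving; thus $(t_1,\dots,t_p)$ is optimal exactly when $\frac{k_i-t_i}{n_i-t_i}\le\frac{k_j-t_j+1}{n_j-t_j+1}$ for all such $i,j$. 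Writing $t_l=|T\cap X_l|$ for the generating $t$-set of a trivial family, this is precisely condition \eqref{ES2}. Finally, for the equality characterization I would use the strictness obtained in the counting step to conclude that any $\mf$ attaining the bound must be trivial, and then that among trivial families exactly those whose generator $T$ satisfies \eqref{ES2} are extremal. The delicate point is that shifting need not recover the original family, so the uniqueness must be argued directly on $\mf$—for instance by showing that a maximum family is already compressed up to relabeling within each part, or by a separate stability argument—rather than simply read off from $\mf'$.
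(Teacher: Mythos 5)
Your outline follows the same skeleton as the paper's proof (your internal shifts are the paper's $l$-shifted families, your window $W$ is the paper's set $K$, and your transfer criterion for optimality of $(t_1,\dots,t_p)$ is exactly condition (\ref{ES2})), but the two steps you leave open are precisely where the content lies, and the first one is mis-framed. The ``second-order comparison'' you describe as the main obstacle does not exist once you split on $\alpha:=\min_{F\in\mf'}|F\cap W|$. If $\alpha=t$, say $|A\cap W|=t$, then for every $B\in\mf'$ the localization lemma gives $|A\cap B\cap W|\ge t$, which forces $A\cap W\subseteq B$; hence $\mf'$ is contained in the full $t$-star generated by $T=A\cap W$, and the bound $|\mf'|\le\prod_l\binom{n_l-t_l}{k_l-t_l}$ is immediate --- no race between non-minimal traces and any slack, and no use of the hypothesis on the $n_i$ (equivalently, summing your exact-trace counts over all traces containing $T$ gives exactly $\prod_l\binom{n_l-t_l}{k_l-t_l}$ by Vandermonde). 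The hypothesis $n_i>2(t+1)pk_i^2$ is needed only in the complementary case $\alpha\ge t+1$, where every set meets $W$ in at least $t+1$ points, there are boundedly many possible traces, and each contributes at least one extra factor of order $k_i^2/n_i$, so $|\mf'|$ is strictly below the target; this is the paper's count over the pairs $(H,\beta)$. As written, your proposal asserts the key inequality instead of proving it, and the comparison you actually set up (non-minimal traces versus the slack of the trivial family, both of the same order in the $n_l$) is both unnecessary and not obviously winnable with your crude per-trace bounds.

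The second gap is the one you flag yourself: transferring the equality characterization back to the original, unshifted family. This cannot be left as a remark; the paper devotes Lemma \ref{L3} to exactly this point, proving that if a single shift $\Delta_{i,j}(\mf)$ is a full $t$-star then $\mf$ itself already is, and the proof is not soft --- it uses connectivity of direct products of Kneser graphs (Lemma \ref{KCON}) together with Tokushige's product theorem for cross $t$-intersecting families to show that all ``links'' of $\mf$ are stars over the same $t$-set. Your two suggested substitutes (``a maximum family is already compressed up to relabeling within each part,'' or ``a separate stability argument'') are not arguments: the first begs the question, since one would need to know the extremal family is a star before one can relabel it into a compressed one, and the second is unspecified. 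Without a lemma of the strength of Lemma \ref{L3}, the ``only if'' direction of the theorem is unproven. (A smaller issue of the same kind: your claim that pairwise-transfer optimality of $(t_1,\dots,t_p)$ implies global optimality is correct but still needs the term-pairing argument the paper gives via (\ref{ES1}); monotonicity alone, invoked in one sentence, does not discharge it.)
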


We remark here that $t$-intersecting subfamilies of $\mh_1$ with maximum size may not be trivial when $n_1,\dots,n_p$ are small. Under the condition that $p=t=2$, $n_1=8$, $n_2=10$ and $k_1=k_2=4$, it is routine to check that the $2$-intersecting family $\left\{A\in\mh_1: |A\cap[4]|\ge3\right\}$ has a larger size than the largest trivially $2$-intersecting subfamily of $\mh_1$. 

In \cite{MDEKR}, Katona extended $\mh_1$ to a more general case. For a non-empty finite set $\mr\subset\underbrace{\mathbb{Z}^+\times\cdots\times\mathbb{Z}^+}_{p}$, write
$$\mh_2:=\bigcup_{(r_1,\dots,r_p)\in\mr}\binom{X_1,\dots,X_p}{r_1,\dots,r_p}.$$
For convenience, let $b$ and $c$ denote the maximum and minimum of numbers appearing in some elements of $\mr$, respectively. By the cyclic method, Katona proved the following result.
\begin{thm}\rm{(\cite{MDEKR})}
	Suppose $p=2$ and $n_1,n_2\ge9b^2$. If $\mf\subset\mh_2$ is intersecting, then $|\mf|$ cannot exceed the size of the largest trivially intersecting subfamily of $\mh_2$.
\end{thm}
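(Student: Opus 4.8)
The plan is to follow the shifting technique and reduce the problem to a cross-intersecting sum estimate in a smaller product instance. Write $T^{(1)}=\sum_{(r_1,r_2)\in\mr}\binom{n_1-1}{r_1-1}\binom{n_2}{r_2}$ and $T^{(2)}=\sum_{(r_1,r_2)\in\mr}\binom{n_1}{r_1}\binom{n_2-1}{r_2-1}$ for the sizes of the two stars; the largest trivially intersecting subfamily has size $\max\{T^{(1)},T^{(2)}\}$, so the goal is $|\mf|\le\max\{T^{(1)},T^{(2)}\}$. First I would apply compressions. For $i<j$ in a common part, the shift $S_{ij}$ preserves the profile $(|A\cap X_1|,|A\cap X_2|)$ of every set, hence maps $\mh_2$ to itself; it also preserves $|\mf|$ and the intersecting property. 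Since $T^{(1)},T^{(2)}$ depend only on $\mr,n_1,n_2$, it suffices to treat a shifted $\mf$. If $\mf$ has a common element it lies in a star and the bound is immediate, so I assume $\mf$ is non-trivial; then for every point some member avoids it.

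Next I would localize at one point in the part carrying the larger star; say this is the point $1\in X_1$ (the other case is symmetric after interchanging $X_1,X_2$), so $T^{(1)}=\max\{T^{(1)},T^{(2)}\}$. Splitting $\mf=\mf_1\sqcup\mf_0$ according to whether a set contains $1$ and deleting $1$ from the members of $\mf_1$ gives a family $\mg_1$ of profiles $(r_1-1,r_2)$, with $|\mf|=|\mg_1|+|\mf_0|$. Both $\mg_1$ and $\mf_0$ live on the ground set $X\bs\{1\}$, both remain shifted with respect to shifts avoiding position $1$, and the intersecting condition between $\mf_1$ and $\mf_0$ becomes exactly that $\mg_1$ and $\mf_0$ are \emph{cross-intersecting}. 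Note that $\mg_1$ is contained in the ambient family of all profiles $(r_1-1,r_2)$ on $X\bs\{1\}$, whose size is precisely $T^{(1)}$. Non-triviality lets me apply down-shifts inside each part that avoid position $1$ to a member of $\mf_0$, producing the ``lowest'' such member $M=\{2,\dots,a_1+1\}\cup\{n_1+1,\dots,n_1+a_2\}\in\mf_0$ with $(a_1,a_2)\in\mr$; every member of $\mg_1$ then meets $M$, which seeds the estimate below.

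The whole theorem is thereby reduced to the cross-intersecting sum bound $|\mg_1|+|\mf_0|\le T^{(1)}$, and this is the step I expect to be the main obstacle. The difficulty is that the two families carry different profiles, $(r_1-1,r_2)$ versus $(r_1,r_2)$, so this is a mixed-uniformity cross-intersecting problem on a product ground set rather than the classical single-uniformity one. The crude bound ``every member of $\mf_0$ meets $M$'' is too weak by a factor of order $b$, since a union bound over the $\le 2b$ points of $M$ overcounts; to remove this factor one must use the full cross-intersecting constraint, namely that every member of $\mg_1$ meets every member of $\mf_0$. The plan is to prove the sum bound by a Kruskal--Katona/shifting argument: using that $\mg_1$ avoids all sets disjoint from a fixed small member of $\mf_0$, one shows that enlarging $\mf_0$ beyond the star forces a strictly larger deficit in $\mg_1$, so the sum is maximized when $\mf_0=\emptyset$ and $\mg_1$ is the full ambient family, giving exactly $T^{(1)}$.

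The hypothesis $n_1,n_2\ge 9b^2$ enters precisely at this point: controlling the relevant binomial ratios $\binom{n_l-O(b)}{\cdot}/\binom{n_l}{\cdot}$, and hence guaranteeing that the star dominates every competitor in the mixed cross-intersecting trade-off, requires the parts to be quadratically large in $b$, and $9b^2$ is a clean threshold that makes these estimates close. I would double-check that the direction of the reduction matters: deleting a point from the \emph{larger}-star part is what makes the ambient family for $\mg_1$ have size $\max\{T^{(1)},T^{(2)}\}$, which is why the conclusion is $\le\max$ rather than $\le\min$. (Katona's original cyclic method instead places each $X_l$ on a circle and counts members of $\mf$ that are arc-pairs; there the analogous obstacle is a ``few arcs'' lemma for products of arcs of varying lengths on two circles, after which averaging over all cyclic orders again yields $\max\{T^{(1)},T^{(2)}\}$.)
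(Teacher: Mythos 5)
You should first be aware that the paper does not prove this statement at all: it is quoted as Katona's theorem, established in \cite{MDEKR} by the cyclic method, and the paper's own machinery only yields the related but logically incomparable Theorem \ref{T2} (for $p=2$, $t=1$ it needs $n_i>8b^3$ rather than $n_i\ge 9b^2$), proved by a completely different route: shifting, Lemma \ref{L2}, and the dichotomy on $\alpha(\mf)$ (Lemma \ref{t} for $\alpha(\mf)=t$, a union-bound counting for $\alpha(\mf)\ge t+1$), with no point-deletion or cross-intersecting sum bound anywhere. So your proposal is not a reconstruction of either proof; it must stand on its own.

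It does not, because the lemma you reduce everything to is false as stated. After deleting the point $1$ you keep only the condition that $\mg_1$ and $\mf_0$ are cross-intersecting and claim the sum $|\mg_1|+|\mf_0|$ is maximized at $\mf_0=\emptyset$, $\mg_1$ the full ambient family, giving $T^{(1)}$. But you have discarded the fact that $\mf_0$, being a subfamily of $\mf$, is itself intersecting, and without it the bound fails badly. Already in the single-part case: take $\ma=\{A\}$ with $A\in\binom{[n-1]}{k-1}$ and $\mb=\left\{B\in\binom{[n-1]}{k}:A\cap B\neq\emptyset\right\}$; this pair is cross-intersecting, yet $|\ma|+|\mb|=1+\binom{n-1}{k}-\binom{n-k}{k}$, which is asymptotically $(k-1)\binom{n-1}{k-1}$ as $n\to\infty$ and so exceeds the star size $\binom{n-1}{k-1}$ once $k\ge3$. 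The same example lifts verbatim to your two-part setting ($A$ of profile $(r_1-1,r_2)$, $\mf_0$ all profile-$(r_1,r_2)$ sets avoiding $1$ and meeting $A$), producing a cross-intersecting pair with sum of order $b\cdot T^{(1)}$. This configuration cannot arise from an intersecting $\mf$ precisely because that $\mf_0$ is not intersecting, so the hypothesis you dropped is the essential one, not a convenience; any repair must carry the internal intersection condition on $\mf_0$ (and the shiftedness of the pair) through the trade-off, at which point the ``lemma'' is essentially the original theorem again --- this circularity is exactly what Katona's cyclic method, and the paper's $\alpha(\mf)$-dichotomy in its Theorem \ref{T2}, are designed to break. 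Secondarily, even for your (false) key step you offer only a plan (``Kruskal--Katona/shifting''), so the proposal is a reduction to an unproved, and in fact unprovable, statement.
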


Our another main result extends Katona's result.
 \begin{thm}\label{T2}
	Suppose  $t\le c$. If $n_i>2(t+1)pb^{t+2}$ for any $i\in[p]$, then every largest $t$-intersecting subfamily of $\mh_2$ is trivial.
\end{thm}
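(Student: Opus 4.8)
The plan is to split Theorem~\ref{T2} into a size bound and a uniqueness statement, to handle the size bound by shifting inside each part, and to force triviality by isolating a dominant level on which Theorem~\ref{T1} applies directly. Set $M=\max_{T\in\binom{X}{t}}|\{F\in\mh_2:T\subset F\}|$. For every $\dr=(r_1,\dots,r_p)\in\mr$ and every $i$ we have $|T\cap X_i|\le t\le c\le r_i$, so each $T$ embeds into every level; hence trivial families meet all levels and any largest $t$-intersecting $\mf\subset\mh_2$ already satisfies $|\mf|\ge M$. It therefore suffices to prove (i) $|\mf|\le M$ for every $t$-intersecting $\mf\subset\mh_2$, and (ii) that equality forces $\mf$ to be trivial: a largest family then has size exactly $M$ and must be trivial. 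Throughout I decompose $\mf=\bigcup_{\dr\in\mr}\fr$ with $\fr=\mf\cap\binom{X_1,\dots,X_p}{r_1,\dots,r_p}$, noting that each $\fr$ is $t$-intersecting and that $\fr,\fs$ are cross-$t$-intersecting for $\dr\ne\ds$.

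For (i) I use the shifts $S_{ij}$ with $i<j$ in a common part $X_l$. Since such a shift leaves $|F\cap X_l|$ and the trace of $F$ on every other part unchanged, it maps $\mh_2$ into itself and preserves the level of each set, as well as the $t$-intersecting property and $|\mf|$; so for the size bound I may assume $\mf$ is shifted within each part. For such a family the partitioned analogue of Frankl's shifting lemma applies: any two members already $t$-intersect inside a bounded initial segment $I_l\subset X_l$ of each part, with $|I_l|$ depending only on $b$ and $t$. Restricting to the bounded universe $\bigcup_l I_l$, the $t$-intersecting and cross-$t$-intersecting constraints force the members to share a common $t$-core there while the elements outside the $I_l$ remain free, and a counting argument then yields $|\mf|\le M$. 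The reconciliation of the several levels happens precisely here: the per-level bounds of Theorem~\ref{T1} only give $\sum_{\dr}\max_T(\cdots)\ge M$, and it is the cross-$t$-intersecting constraints between distinct levels that collapse the independent level-optima onto a single common core and recover the sharp value $M$.

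For (ii) I argue on $\mf$ itself, so that no reverse shifting is required. Order the levels by total size $r_1+\cdots+r_p$ and pick a level $\dr^{*}$ of maximum total; its trivial contribution has order $\prod_i n_i^{r_i^{*}-t_i}$, which dominates the $O(n^{-1})$-smaller contribution of every lower level, so both $M$ and $|\mf|$ are governed by $\dr^{*}$. From $|\mf|=M$ one deduces (this is the delicate point, discussed below) that $|\mf_{\dr^{*}}|$ attains the level-maximum, whence the equality clause of Theorem~\ref{T1} forces $\mf_{\dr^{*}}=\{F\in\binom{X_1,\dots,X_p}{r_1^{*},\dots,r_p^{*}}:T^{*}\subset F\}$ for some $T^{*}\in\binom{X}{t}$. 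An avoidance argument then propagates $T^{*}$: if some $G\in\mf$ had $|G\cap T^{*}|<t$, then since $n_i>2(t+1)pb^{t+2}$ vastly exceeds $|G|$ I can complete $T^{*}$ to a set $F\in\mf_{\dr^{*}}$ whose extra elements avoid $G$, giving $F\cap G=T^{*}\cap G$ and contradicting $|F\cap G|\ge t$; hence $T^{*}\subset G$ for all $G\in\mf$, i.e., $\mf$ is trivial.

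I expect the main obstacle to be exactly the quantitative step that upgrades $|\mf|=M$ to the \emph{exact} maximality of $|\mf_{\dr^{*}}|$ on its level, so that the equality clause of Theorem~\ref{T1} delivers the \emph{full} star rather than merely a near-star (a near-star would defeat the avoidance argument, since the completions of $T^{*}$ that avoid a fixed $G$ are fewer than the sets a near-star may omit). The cost of destroying triviality on a single level and the aggregate contribution of all non-dominant levels are of the same order, so separating them requires both the stability implicit in Theorem~\ref{T1} and the explicit slack in the hypothesis $n_i>2(t+1)pb^{t+2}$, whose exponent $t+2$ is calibrated to dominate the up-to-$b^{t}$ many candidate $t$-cores together with the $b^{2}$-type size factors inherited from Theorem~\ref{T1}.
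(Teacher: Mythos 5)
Your decomposition into a size bound (i) and an equality analysis (ii) matches the paper's outline, but both of your key steps have genuine gaps, and they sit exactly where the paper does its real work. In (i), the assertion that after shifting "the $t$-intersecting and cross-$t$-intersecting constraints force the members to share a common $t$-core" inside the bounded window is unjustified and false in general: a $t$-intersecting family confined to a bounded universe need not have a common $t$-core (already at a single level, all sets meeting a fixed $(t+2)$-subset of the window in at least $t+1$ elements form a counterexample). What the paper actually does is a dichotomy on $\alpha(\mf)=\min_{F\in\mf}|F\cap K|$, where $K=\bigcup_{i}Q_i(2b_i-1)$. If $\alpha(\mf)=t$, there is a member $F_0$ with $|F_0\cap K|=t$, and Lemma \ref{L2} forces $F_0\cap K\subset G$ for every $G\in\mf$; this is the only source of a common core (Lemma \ref{t}). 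If $\alpha(\mf)\ge t+1$, no core need exist, and the paper instead proves the \emph{strict} inequality $|\mf|<\sum_{\dr\in\mr}\binom{n_1-t}{r_1-t}\prod_{i=2}^{p}\binom{n_i}{r_i}$ by a counting estimate; this is where the exponent $t+2$ in the hypothesis is genuinely used, via the comparison $\gr\le b^{t}\binom{n_1-t}{r_1-t}\prod_{i=2}^{p}\binom{n_i}{r_i}$. Your phrase "a counting argument then yields $|\mf|\le M$" conceals precisely this case split, without which the bound cannot be closed.

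In (ii), the step you yourself flag as delicate is a gap your plan cannot fill. From $|\mf|=M$ one cannot conclude that $|\mf_{\dr^{*}}|$ attains its per-level maximum: $M$ is a maximum over a \emph{common} core $T$ of a sum over levels, not a sum of per-level maxima, so equality gives no information about any individual level; moreover, when several levels share the maximal total size $r_1+\cdots+r_p$ (e.g. $(2,3)$ and $(3,2)$), no single level dominates as you claim. Theorem \ref{T1} carries no stability clause, so near-star configurations on the dominant level cannot be excluded by anything available in the paper. The paper sidesteps all of this: since the case $\alpha(\mf)\ge t+1$ yields strict inequality, any largest family must (after shifting) fall into the case $\alpha(\mf)=t$, where Lemma \ref{t} makes the whole family --- all levels simultaneously --- a full $t$-star; triviality of the original, unshifted family is then recovered by Lemma \ref{L3}, the un-shifting lemma you dismiss as unnecessary. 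Without Lemma \ref{L3}, or a stability version of Theorem \ref{T1} that you would have to prove separately, your route to (ii) does not go through.
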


Write
$$\mh_3:=\left\{F\in\binom{X}{k}: |F\cap X_i|\ge a_i,\ i=1,\dots,p\right\},$$
where $a_1,\dots,a_p$ are integers with $a_1+\cdots+a_p\le k$ and $0\le a_i<n_i$. In \cite{CON}, Frankl et al. put forward the following conjecture.
\begin{cx}\rm{(\cite{CON})}\label{cx}
	If $n_i\ge2a_i$ for all $i$ and $n_i>k-\sum_{j=1}^pa_j+a_i$ for all but at most one $i\in[p]$ such that $a_i>0$, then the largest intersecting subfamily of $\mh_3$ is trivial.
\end{cx}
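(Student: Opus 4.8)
\medskip
\noindent\textbf{A proof strategy for Conjecture~\ref{cx}.}
The plan is to establish the Erd\H{o}s--Ko--Rado property of $\mh_3$ by the shifting technique, sharpening the thresholds that Theorem~\ref{T2} yields for the general family $\mh_2$. The first observation is that $\mh_3$ is itself an instance of $\mh_2$: writing $s:=k-\sum_{j}a_j\ge0$ for the slack, one has
\[
\mh_3=\bigcup_{(r_1,\dots,r_p)\in\mr}\binom{X_1,\dots,X_p}{r_1,\dots,r_p},\qquad \mr=\Big\{(r_1,\dots,r_p): r_i\ge a_i,\ \textstyle\sum_i r_i=k\Big\},
\]
so every $F\in\mh_3$ has a \emph{profile} $|F\cap X_i|=a_i+e_i$ with $e_i\ge0$ and $\sum_i e_i=s$. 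Since $t=1$, ``trivial'' means ``star'', so the statement is equivalent to the claim that the maximum size of an intersecting subfamily equals the maximum degree $\max_i d_i$ (where $d_i$ is the common number of members of $\mh_3$ through a fixed $x\in X_i$) and that every optimal family is such a star. Thus the whole argument reduces to (a) a Hilton--Milner--type upper bound for non-trivial families and (b) a comparison of that bound with $\max_i d_i$.

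First I would set up shifting \emph{within each part}: order the elements of each $X_i$ and, for $a<b$ in the same part, apply the shift replacing $x_{i,b}$ by $x_{i,a}$. Because such a shift leaves $|F\cap X_l|$ unchanged for every $l$, it maps $\mh_3$ into itself, and, as usual, it preserves both the intersecting property and the size of a family; since $\mh_3$ is invariant under these operations the whole reduction stays inside $\mh_3$. Hence it suffices to prove the bound and the equality characterisation for families that are fully shifted inside every part, and I would record the standard consequence that for such a family the disjointness of two members forces them to be spread in a controlled way.

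The heart of the proof, and the main obstacle, is the \emph{disjunctive} nature of intersection across parts: $F\cap G\ne\varnothing$ means $(F\cap X_i)\cap(G\cap X_i)\ne\varnothing$ for \emph{some} $i$, so two members may meet in one part while being disjoint in all others. This breaks the single-part structure and blocks any direct appeal to Theorem~\ref{F}. The plan is to bound a non-trivial shifted intersecting family $\mathcal{F}$ by splitting each relevant pair of members according to the part $i$ in which they intersect: fixing a member $F_1\in\mathcal{F}$ that is not contained in the best star, one estimates, part by part, the number of $G\in\mathcal{F}$ whose intersection with $F_1$ lies in $X_i$, and then sums these contributions. The reason to expect the near-optimal thresholds of the conjecture, rather than the polynomial $b^{t+2}$ bound of Theorem~\ref{T2}, is that the graded threshold form of $\mr$ lets each per-part count be evaluated as an exact binomial sum instead of a crude global one.

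Finally I would compare this Hilton--Milner bound against $\max_i d_i$, and this is exactly where the two hypotheses enter. The condition $n_i\ge 2a_i$ guarantees that within each part the required intersection $a_i$ sits below the half-threshold, so that each part contributes a genuine intersecting constraint and its star is large. The condition $n_i>s+a_i$, i.e.\ $n_i-a_i>s$, says that no active part except possibly one can be \emph{saturated} (filled by its required elements together with all of the slack); saturation is precisely the mechanism producing non-trivial optima in small cases, as in the family $\{A\in\mh_1:|A\cap[4]|\ge3\}$ exhibited after Theorem~\ref{T1}, so the hypothesis confines this phenomenon to the single exceptional part, which I would isolate as the location of the optimal star. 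The hard part will be verifying, once at most one part is saturated and all others are open, that the per-part ``second-best-star'' sum is strictly dominated by $\max_i d_i$; I expect this final inequality, carried out with precise binomial estimates, to be the most delicate computation, and the case analysis around the single admissible saturated part to be where the ``all but at most one'' clause is indispensable.
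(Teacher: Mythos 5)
You are attempting to prove a statement that the paper itself does not prove: Conjecture~\ref{cx} is due to Frankl, Han, Huang and Zhao and remains open, and the paper only records, as a corollary of Theorem~\ref{T2} with $t=1$, the special case where all $a_i$ are positive and every $n_i>4p\bigl(k-\sum_{j=1}^pa_j+\max_{i\in[p]}a_i\bigr)^3$. Your concrete opening moves---recasting $\mh_3$ as an instance of $\mh_2$ with $\mr=\{(r_1,\dots,r_p):r_i\ge a_i,\ \sum_i r_i=k\}$ and shifting within each part---are exactly the paper's route to that corollary (with one slip: this recasting requires all $a_i>0$, since $\mr$ consists of tuples of \emph{positive} integers and Theorem~\ref{T2} needs $t\le c$; if some $a_i=0$ the profile $r_i=0$ occurs in $\mh_3$ and the reduction as stated does not apply). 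Beyond that point the proposal is a plan rather than a proof: the two steps carrying the entire mathematical content---the Hilton--Milner-type upper bound for non-trivial shifted subfamilies of $\mh_3$, and its comparison with $\max_i d_i$ under the weak hypotheses $n_i\ge2a_i$ and $n_i>k-\sum_ja_j+a_i$---are explicitly deferred (``the hard part will be verifying\dots'', ``I expect this final inequality\dots to be the most delicate computation''). You correctly identify the disjunctive nature of multi-part intersection as the obstacle, but nothing in the proposal overcomes it.

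The gap is not one of missing routine detail; the machinery you propose to ``sharpen'' cannot reach the conjectured thresholds by refinement. Under the conjecture's hypotheses one only knows $n_i>b_i$, where $b_i=a_i+s$ with $s=k-\sum_ja_j$ is the largest possible value of $|F\cap X_i|$ over $F\in\mh_3$; but the un-shifting step (Lemma~\ref{L3}) needs $n_m>2(t+1)b_m=4b_m$, its inner appeal to Kneser-graph connectivity (Lemma~\ref{KCON}) needs $n_m-t_m>2(r_m-t_m)$, and the counting for $\alpha(\mf)\ge t+1$ in the proofs of Theorems~\ref{T1} and~\ref{T2} needs each factor of the form $2(t+1)pr_i^2/n_i$ to be less than $1$---all of which fail when $n_i$ is barely above $s+a_i$ (take, say, $a_i=5$, $s=4$, $n_i=10$: then $n_i\ge2a_i$ and $n_i>s+a_i$ hold, yet $n_i=10$ while $4b_i=36$). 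So a proof of the full conjecture must \emph{replace} the paper's counting scheme, not tune its constants, and your sketch supplies no substitute: no per-part binomial estimate is carried out, the handling of the single ``saturated'' part is only described, and the equality characterisation is never addressed. As written, the argument establishes nothing beyond the weaker special case the paper already derives from Theorem~\ref{T2}.
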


As a corollary of Theorem \ref{T2}, Conjecture \ref{cx} is true when $a_1,\dots,a_p$ are positive and each $X_i$ has a size larger than $4p(k-\sum_{i=1}^pa_i+\max_{i\in[p]}a_i)^3$.

In Section 2, we will focus on the shifting technique and prove some useful results for direct products.
In Section 3, we will give the proof of our main results.  
\section{Shifting technique for direct products}

In this section, we investigate the shifting technique and prove some useful results for direct products.

For any $i,j\in X$ and $F\subset X$,
define
\[\delta_{i,j}(F)=
\begin{cases}
(F\setminus\{j\})\cup\{i\},&j\in F, i\not\in F;\\
F,&otherwise.
\end{cases}
\]
Let $\Delta_{i,j}$ be the operation on a family $\mf\subset2^X$ defined by
$$\Delta_{i,j}(\mf)=\{\delta_{i,j}(F):F\in\mf\}\cup\{F\in\mf: \delta_{i,j}(F)\in\mf\}.$$
We have $|\Delta_{i,j}(\mf)|=|\mf|$.

A family $\mf\subset2^X$ is called \emph{shifted} if $\Delta_{i,j}(\mf)=\mf$ holds for any $i,j\in X$ with $i<j$. By applying such operations repeatedly to a subfamily of $2^X$ we can get a shifted family.

We say two non-empty subfamilies $\ma$ and $\mb$ of $2^X$ are \emph{cross $t$-intersecting} if $|A\cap B|\ge t$ for any $A\in\ma$ and $B\in\mb$. The following lemma states that the shifting operation keeps such intersection property.
\begin{lem}\rm{(\cite[Lemma 2.1]{CP})}\label{L1}
	Let $\ma$ and $\mb\subset2^X$ be cross $t$-intersecting families.
	\begin{itemize}
		\item[\rm{(\romannumeral1)}] For any $i,j\in X$,
		$\Delta_{i,j}(\ma)$ and $\Delta_{i,j}(\mb)$ are still cross $t$-intersecting.
		\item[\rm{(\romannumeral2)}] If $t\le r\le s\le n$, $\ma\subset\binom{X}{r}$, $\mb\subset\binom{X}{s}$, and $\ma$ and $\mb$ are shifted, then $|A\cap B\cap[r+s-t]|\ge t$ for any $A\in\ma$ and $B\in\mb$.
	\end{itemize}
\end{lem}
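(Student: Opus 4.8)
The plan is to treat the two parts separately, both through the elementary combinatorics of the operation $\delta_{i,j}$.

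For part (i), I would take arbitrary $A\in\Delta_{i,j}(\ma)$ and $B\in\Delta_{i,j}(\mb)$ and show $|A\cap B|\ge t$ by a case analysis according to how each of $A,B$ sits inside its shifted family: each is either \emph{stationary} (already an element of $\ma$, resp. $\mb$) or \emph{moved} (equals $\delta_{i,j}(F)=(F\setminus\{j\})\cup\{i\}$ for some original $F$ with $j\in F$ and $i\notin F$). When both are stationary, cross $t$-intersecting of $\ma,\mb$ gives the bound at once; when both are moved, both new sets contain $i$ and omit $j$, so $A\cap B=((A_0\cap B_0)\setminus\{j\})\cup\{i\}$ has the same cardinality as $A_0\cap B_0\ge t$. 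The one delicate configuration is when exactly one set, say $A$, is moved while $B$ is stationary with $j\in B$ and $i\notin B$; here replacing $j$ by $i$ in $A$ can a priori cost one element of the intersection. I would resolve it by the key structural observation that a stationary $B$ which is itself shift-eligible (that is, $j\in B$, $i\notin B$) can survive in $\Delta_{i,j}(\mb)$ only through the retention clause $\{F\in\mb:\delta_{i,j}(F)\in\mb\}$, forcing $\delta_{i,j}(B)\in\mb$; applying cross $t$-intersecting to the original $A_0$ and to $\delta_{i,j}(B)$ then recovers $|A\cap B|\ge t$. Note that this argument never uses the order of $i$ and $j$, so it covers arbitrary $i,j\in X$.

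For part (ii), I would argue by contradiction. Suppose some $A\in\ma$, $B\in\mb$ satisfy $|A\cap B\cap[m]|\le t-1$, where $m=r+s-t$. Since $\ma,\mb$ are cross $t$-intersecting we have $|A\cap B|\ge t$, hence $|A\cup B|=r+s-|A\cap B|\le m$; because $A\cap B$ must contain a common element exceeding $m$, the set $A\cup B$ occupies at most $m-1$ positions of $[m]$, leaving a free position $z\in[m]\setminus(A\cup B)$. I would then pick a common element $w>m$ and apply $\delta_{z,w}$ to $A$ alone: shiftedness of $\ma$ keeps $\delta_{z,w}(A)\in\ma$, while $z\notin B$ ensures that this move deletes $w$ from $A\cap B$ without creating any new common element, so $|A\cap B|$ drops by exactly one and $|A\cap B\cap[m]|$ is unchanged. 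Iterating this one-sided shift until no common element exceeds $m$ produces a pair in $\ma\times\mb$ whose intersection lies entirely in $[m]$ and therefore has size $|A\cap B\cap[m]|\le t-1<t$, contradicting the cross $t$-intersecting hypothesis.

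The main obstacle is the delicate mixed case in part (i): it is precisely where naive shifting appears to destroy an intersection, and it is salvaged only by exploiting the reason a shift-eligible set was nonetheless retained. In part (ii) the crux is the counting step yielding a free coordinate inside $[m]$, which hinges on converting the intersection lower bound $|A\cap B|\ge t$ into the union upper bound $|A\cup B|\le m$; the invariance of $|A\cap B\cap[m]|$ under the one-sided shift is then what guarantees the iteration terminates in a contradiction.
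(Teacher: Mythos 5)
Your proof is correct. Note first that the paper never proves Lemma \ref{L1} at all --- it is quoted from \cite{CP} --- so the only in-paper argument to measure your proposal against is the proof of Lemma \ref{L2}, which is the multi-part generalization of part (ii). Your part (i) is the standard shifting argument and is complete: the stationary/moved dichotomy, the identification of the unique dangerous configuration ($A$ moved, $B$ stationary with $j\in B$, $i\notin B$; the other mixed subcases are indeed routine since the intersection cannot shrink there), and the retention-clause observation that such a $B$ can lie in $\Delta_{i,j}(\mb)$ only because $\delta_{i,j}(B)\in\mb$, after which $|A\cap B|=|A_0\cap\delta_{i,j}(B)|\ge t$ closes the case. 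Your part (ii) reaches the same conclusion as the paper's Lemma \ref{L2} but by different mechanics: you shift $A$ one element at a time and run a proof by contradiction, whereas the paper performs the whole exchange on $B$ in one shot --- it collects the common elements outside the prefix into sets $E_i$, chooses free sets $G_i$ inside the prefix with $|G_i|=|E_i|$ (the inequality $|D_i|\ge|E_i|$ there plays exactly the role of your free-position count $z\in[m]\setminus(A\cup B)$), forms $C=\bigl(B\setminus\bigcup_{i}E_i\bigr)\cup\bigcup_{i}G_i\in\mb$ using shiftedness, and reads off the bound directly from $|A\cap C|\ge t$ since $A\cap C$ lies inside the prefix; no contradiction and no termination argument are needed. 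The two routes are equivalent in substance: the one-shot swap buys a direct proof, while your iterative version needs only single-swap bookkeeping at each step, and your invariance observation (that $|A\cap B\cap[m]|$ is unchanged while $|A\cap B\setminus[m]|$ strictly decreases) is exactly what makes the iteration terminate, so both arguments are sound and both would adapt to the multi-part setting of Lemma \ref{L2}.
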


For $\mf\subset\mh_2$,
if $\Delta_{i,j}(\mf)=\mf$ holds for any $i,j\in X_l$ with $i<j$,
we say $\mf$ is $l$-\emph{shifted}. Similar to the single-part case, one gains an $l$-shifted family by doing the shifting operation repeatedly on $\mf$. Notice that Lemma \ref{L1}(\romannumeral1) still holds for $\ma\subset\binom{X_1,\dots,X_p}{r_1,\dots,r_p}$ and $\mb\subset\binom{X_1,\dots,X_p}{s_1,\dots,s_p}$. 

For $l\in[p]$ and a positive integer $s\le n_l$, denote the collection of the first $s$ elements of $X_l$ by $Q_l(s)$. The next lemma is an extension of Lemma \ref{L1}(\romannumeral2).
\begin{lem}\label{L2}
	Suppose $n_i>r_i+s_i-1$ for any $i\in[p]$.
	Let $\ma\subset\binom{X_1,\dots,X_p}{r_1,\dots,r_p}$ 
	and
	$\mb\subset\binom{X_1,\dots,X_p}{s_1,\dots,s_p}$ be cross $t$-intersecting families. If $\ma$ and $\mb$ are $l$-shifted for any $l\in[p]$, then
	$$\sum_{i=1}^p|A\cap B\cap Q_i(r_i+s_i-1)|\ge t$$
	for any $A\in\ma$ and $B\in\mb$.
\end{lem}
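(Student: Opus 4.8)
The plan is to argue by contradiction using a minimal counterexample together with a \emph{one-sided} shift, so that the cross $t$-intersecting hypothesis can be turned into a genuine deficiency. Write $W_i=Q_i(r_i+s_i-1)$ and $W=\bigcup_{i=1}^pW_i$, and suppose some $A\in\ma$, $B\in\mb$ satisfy $\sum_{i=1}^p|A\cap B\cap W_i|\le t-1$. Among all such violating pairs I would fix one minimising the total weight $\sum_{a\in A}a+\sum_{b\in B}b$, where each element is weighted by its position inside its own part. Since $\ma$ and $\mb$ are cross $t$-intersecting we have $|A\cap B|\ge t$, and since the window contribution is at most $t-1$ there must be a common element $x\in A\cap B$ lying outside every window; say $x\in X_i$ with $x\notin W_i$, i.e. $x$ occupies a position $\ge r_i+s_i$ in $X_i$.

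The engine of the argument is a per-part count that locates a free slot inside the offending window. In the part $X_i$ containing $x$ we have $|A\cap X_i|=r_i$ and $|B\cap X_i|=s_i$, so $|(A\cup B)\cap X_i|=r_i+s_i-|A\cap B\cap X_i|\le r_i+s_i-1=|W_i|$, using $x\in A\cap B\cap X_i$. Because $x$ itself lies in $(A\cup B)\cap(X_i\setminus W_i)$, at most $r_i+s_i-2$ elements of $A\cup B$ can fall inside $W_i$, so $W_i$ contains a position $y$ with $y\notin A\cup B$, and $y$ precedes $x$ in the order on $X_i$. I would now shift only in the first family: since $\ma$ is $i$-shifted, $A'=\delta_{y,x}(A)\in\ma$. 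The point of a one-sided shift is that $y\notin B$, so the newly inserted element does not re-enter the intersection; thus $A'\cap B=(A\cap B)\setminus\{x\}$.

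The proof then splits according to the size of the intersection, and this split is the crux. If $|A\cap B|=t$, the one-sided shift yields $|A'\cap B|=t-1<t$ with $A'\in\ma$ and $B\in\mb$, directly contradicting the cross $t$-intersecting hypothesis. If instead $|A\cap B|\ge t+1$, then $(A',B)$ still satisfies $|A'\cap B|\ge t$, and moreover it is again a violating pair: removing $x\notin W$ leaves $\sum_i|A'\cap B\cap W_i|=\sum_i|A\cap B\cap W_i|\le t-1$, while replacing $x$ by the earlier element $y$ strictly lowers the total weight, contradicting minimality. Either way we reach a contradiction, so no violating pair exists. The main obstacle, and the place where care is needed, is precisely the choice to shift in only one family: shifting in both would reinsert $y$ into the intersection and leave $|A\cap B|$ unchanged, so the cross $t$-intersecting hypothesis could never be activated; it is the deliberate destruction of one intersection element that makes the bound $t-1$ untenable. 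I would also note that for $p=1$ the statement already follows from Lemma \ref{L1}(ii), since $[r_1+s_1-t]\subseteq Q_1(r_1+s_1-1)$, which serves as a consistency check; the genuinely new feature is that the budget $t$ is shared across the $p$ parts, which is why only the $t=1$ window $r_i+s_i-1$ can be guaranteed in each individual part, while the full count $t$ is assembled globally over $W$.
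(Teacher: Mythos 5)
Your proof is correct and rests on exactly the same mechanism as the paper's: the per-part count showing that a window $Q_i(r_i+s_i-1)$ containing an out-of-window common element must have a free slot outside $A\cup B$, followed by a one-sided shift that moves that common element into the free slot (so it leaves the intersection) while staying inside the shifted family, and finally an appeal to the cross $t$-intersecting hypothesis. The only difference is packaging: the paper performs all such moves at once on $B$ (its sets $D_i$, $E_i$, $G_i$ and the set $C$) and concludes directly from $|A\cap C|\ge t$, whereas you move one element at a time inside a minimal-counterexample induction -- both are sound.
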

\begin{proof}
	For each $i\in[p]$, write
	$$D_i:=Q_i(r_i+s_i-1)\setminus(A\cup B),\ E_i:=(A\cap B\cap X_i)\setminus Q_i(r_i+s_i-1).$$
	Note that 
	\begin{equation}\label{221}
	r_i+s_i
	=|A\cap X_i|+|B\cap X_i|\ge 2|E_i|+|(A\cup B)\cap Q_i(r_i+s_i-1)|,
	\end{equation} 
	\begin{equation}\label{222}
	|D_i|=r_i+s_i-1-|(A\cup B)\cap Q_i(r_i+s_i-1)|.
	\end{equation}
	If $|E_i|\neq\emptyset$, then $|D_i|\ge|E_i|$ from (\ref{221}) and (\ref{222}). 
	
	Let $G_i$ be an $|E_i|$-subset of $D_i$. Write
	$$C:=\left(B\bs
	\bigcup_{i\in[p]}E_i\right)\cup\left(\bigcup_{i\in[p]}G_i\right).$$
	Observe that, for each $i\in[p]$,
	\begin{equation*}
	\begin{aligned}
	C\cap A\cap X_i&=\left((B\setminus E_i)\cup G_i\right)\cap A\cap X_i=A\cap B\cap Q_i(r_i+s_i-1).
	\end{aligned}
	\end{equation*}
	When $E_i\neq\emptyset$, notice that $\max G_i<\min E_i$ and $|E_i|=|G_i|$. Thus $C$ can be obtained by doing a series of shifting operations on $B$. Since $\mb$ is $l$-shifted for any $l\in[p]$, we have $C\in\mb$. So $|A\cap C|\ge t$. Hence
	$$\sum_{i=1}^p|A\cap B\cap Q_i(r_i+s_i-1)|=\sum_{i=1}^p|A\cap C\cap X_i|=|A\cap C|\ge t,$$
	as desired.
\end{proof}

Given positive integers $g,h$ with $g\ge2h$, it is well-known that the \emph{Kneser graph} $KG(g,h)$ is the graph on the vertex set $\binom{[g]}{h}$, with an edge between two vertices if and only if they are disjoint. To characterize extremal structures in Theorems \ref{T1} and \ref{T2}, we need a property of Kneser graphs which is derived from Theorem 1 in \cite{CONNECTED}.
\begin{lem}\label{KCON}
	For Kneser graphs $KG(g_1,h_1),\dots,KG(g_w,h_w)$ with $g_i>2h_i$ for any $i\in[w]$, their direct product $\prod_{i\in[w]}KG(g_i,h_i)$ is connected.
\end{lem}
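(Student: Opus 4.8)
The plan is to reduce the statement to the cited theorem on connected components of direct products of graphs, which for connected factors governs connectivity entirely through bipartiteness: a direct product of connected graphs is connected precisely when at most one factor is bipartite. Since a direct product of several graphs can be built up one factor at a time, it therefore suffices to verify two local properties of each factor under the hypothesis $g_i>2h_i$, namely that $KG(g_i,h_i)$ is connected and that $KG(g_i,h_i)$ is non-bipartite. With both in hand, the cited theorem (every factor connected, zero factors bipartite) immediately yields that $\prod_{i\in[w]}KG(g_i,h_i)$ is connected.

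For connectivity of a single $KG(g,h)$ with $g>2h$, I would argue that any two $h$-sets $A$ and $B$ differing in exactly one element admit a common neighbour. Such a common neighbour is any $h$-subset of $[g]\setminus(A\cup B)$, and since $|A\cup B|=h+1$ the required set exists as soon as $g-h-1\ge h$, i.e. $g\ge 2h+1$. Composing these length-$2$ detours along a chain of single-element swaps connecting $A$ to $B$ (such a chain exists because the Johnson graph on $\binom{[g]}{h}$ is connected for $0<h<g$) produces a walk from $A$ to $B$ in $KG(g,h)$; hence $KG(g,h)$ is connected.

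For non-bipartiteness I would exhibit an odd cycle. First fix $g=2h+1$ and identify the ground set with $\mathbb{Z}_{2h+1}$; let $A_i=\{i,i+1,\dots,i+h-1\}$ be the arc of $h$ consecutive elements starting at $i$. A short computation shows that $A_i$ and $A_{i+h}$ together cover the $2h$ consecutive residues $i,\dots,i+2h-1$ and are therefore disjoint, so $A_0,A_h,A_{2h},\dots$ is a walk in $KG(2h+1,h)$. Because $\gcd(h,2h+1)=1$, the vertices $A_{kh}$ for $k=0,1,\dots,2h$ are pairwise distinct and the walk first returns to $A_0$ after exactly $2h+1$ steps, producing a cycle of odd length $2h+1\ge 3$. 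For general $g>2h$, restricting attention to the $h$-subsets of any fixed $(2h+1)$-element subset of $[g]$ gives an induced copy of $KG(2h+1,h)$, so the same odd cycle lives in $KG(g,h)$; thus every factor is non-bipartite.

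The main obstacle is the non-bipartiteness step. When $2h<g<3h$ there are no three pairwise disjoint $h$-sets and hence no triangle in $KG(g,h)$, so one genuinely cannot rely on a short odd cycle and must instead produce the arc-cycle of length $2h+1$; verifying the disjointness of consecutive arcs and the distinctness of the $2h+1$ vertices (both resting on $\gcd(h,2h+1)=1$) is the only place where real care is needed, the remaining steps being routine applications of the cited connectivity theorem.
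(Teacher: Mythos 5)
Your proposal is correct and takes essentially the same route as the paper: the paper gives no detailed argument at all, simply deriving the lemma from Theorem~1 of the cited Bre\v{s}ar--\v{S}pacapan reference (the Weichsel-type criterion that a direct product of non-trivial connected graphs is connected if and only if at most one factor is bipartite), which is exactly the reduction you perform. The only difference is that you additionally verify the hypotheses --- that each $KG(g_i,h_i)$ with $g_i>2h_i$ is connected (via common neighbours of sets differing in one element) and non-bipartite (via the odd cycle of arcs of length $2h_i+1$) --- details the paper leaves implicit, and your verifications are sound.
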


For $\mh\subset2^X$, we say $\mf\subset\mh$ is a \emph{full $t$-star} in $\mh$ if $\mf$ is the collection of all sets in $\mh$ containing a fixed $t$-subset of $X$. For each $i\in[p]$, let $b_i$ be the maximum number appearing in the $i$-th coordinate of some elements of $\mr$.
\begin{lem}\label{L3}
	Let $\mf\subset\mh_2$ be a $t$-intersecting family. Suppose $n_m>2(t+1)b_m$ for any $m\in[p]$. For $l\in[p]$ and $i,j\in X_l$, if $\Delta_{i,j}(\mf)$ is a full $t$-star in $\mh_2$, then $\mf$ is also a full $t$-star in $\mh_2$.
\end{lem}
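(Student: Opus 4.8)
The plan is to track $\mf$ through its link decomposition at the pair $\{i,j\}$. Put $Y:=X\setminus\{i,j\}$, and for $S\subseteq X$ abbreviate the full $t$-star by $\mathscr{S}_S:=\{F\in\mh_2:S\subseteq F\}$. For $\ast\in\{\emptyset,\{i\},\{j\},\{i,j\}\}$ let $\mf(\ast)$ be the family of traces $F\setminus\{i,j\}\subseteq Y$ of those $F\in\mf$ with $F\cap\{i,j\}=\ast$. A direct check of the definition of $\Delta_{i,j}$ (using that $i,j\in X_l$, so the operation preserves every profile) yields the standard identities
\[
\Delta_{i,j}(\mf)(\emptyset)=\mf(\emptyset),\qquad \Delta_{i,j}(\mf)(\{i,j\})=\mf(\{i,j\}),
\]
\[
\Delta_{i,j}(\mf)(\{i\})=\mf(\{i\})\cup\mf(\{j\}),\qquad \Delta_{i,j}(\mf)(\{j\})=\mf(\{i\})\cap\mf(\{j\}).
\]
Since $\Delta_{i,j}(\mf)=\mathscr{S}_T$ for some $T\in\binom{X}{t}$, I would read off constraints on the four links of $\mf$ by computing the links of $\mathscr{S}_T$, splitting into cases according to $T\cap\{i,j\}$.

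Next I would dispose of the easy cases. If $i,j\notin T$ then $T\subseteq Y$ and $\delta_{i,j}$ does not affect containment of $T$, so a short case analysis shows every member of $\mf$ contains $T$; as $|\mf|=|\mathscr{S}_T|$ this forces $\mf=\mathscr{S}_T$. If $i,j\in T$ then $\mathscr{S}_T(\{i\})=\mathscr{S}_T(\{j\})=\mathscr{S}_T(\emptyset)=\emptyset$, whence $\mf(\{i\})=\mf(\{j\})=\mf(\emptyset)=\emptyset$ and again $\mf=\mathscr{S}_T$. The configuration $i\notin T,\ j\in T$ cannot occur: it would force $\mf(\{i\})\cap\mf(\{j\})=\mathscr{S}_T(\{j\})=\emptyset$ while $\mathscr{S}_T(\{j\})\neq\emptyset$ (some profile in $\mr$ admits a set containing $T$ and avoiding $i$, since $n_l$ is large), a contradiction. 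This leaves the main case $i\in T,\ j\notin T$; write $W:=T\setminus\{i\}$ and $T':=W\cup\{j\}$. Here $\mathscr{S}_T(\{j\})=\mathscr{S}_T(\emptyset)=\emptyset$, so the identities give $\mf(\emptyset)=\emptyset$ together with a partition $\mf(\{i\})\sqcup\mf(\{j\})=\mathscr{S}_T(\{i\})=:\mathcal{T}$, the family of all valid traces on $Y$ containing $W$.

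The heart of the argument is to show one block of this partition is empty. Because $\mf$ is $t$-intersecting and every trace in $\mathcal{T}$ contains the $(t-1)$-set $W$, for $A\in\mf(\{i\})$ and $B\in\mf(\{j\})$ we have $(A\cup\{i\})\cap(B\cup\{j\})=A\cap B\supseteq W$, so $|A\cap B|\ge t$; that is, $\mf(\{i\})$ and $\mf(\{j\})$ are cross $t$-intersecting, equivalently no two of their members meet in exactly $W$. I would then define a graph $\mathcal{G}$ on $\mathcal{T}$ joining two traces iff their intersection is exactly $W$, so that the partition $\mf(\{i\})\sqcup\mf(\{j\})$ has no crossing edge. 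Writing each trace as $W$ together with its part in $Z:=Y\setminus W$, the restriction of $\mathcal{G}$ to a single profile $\boldsymbol r\in\mr$ is precisely a direct product $\prod_{m\in[p]}KG(g_m,h_m)$ of Kneser graphs, and the hypothesis $n_m>2(t+1)b_m$ (with $r_m\le b_m$) gives $g_m>2h_m$, so each profile class is connected by Lemma \ref{KCON}; the same size slack provides, for any two profiles in $\mr$, a pair of traces disjoint outside $W$, i.e.\ an edge of $\mathcal{G}$ between the two classes. Hence $\mathcal{G}$ is connected, a partition into two blocks with no crossing edge must have an empty block, and so $\mf\subseteq\mathscr{S}_T$ or $\mf\subseteq\mathscr{S}_{T'}$. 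Since swapping $i$ and $j$ is an automorphism of $\mh_2$, $|\mathscr{S}_{T'}|=|\mathscr{S}_T|=|\mf|$, and in either case $\mf$ equals a full $t$-star.

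The step I expect to be the main obstacle is the connectivity of $\mathcal{G}$: matching the per-profile relation ``intersect in exactly $W$'' with a product of Kneser graphs so that Lemma \ref{KCON} applies (extracting $g_m>2h_m$ from the size hypothesis, and handling the degenerate factors where $h_m=0$), and then gluing the finitely many profile classes together by exhibiting cross-profile edges. The remaining work is bookkeeping: verifying the link identities for $\Delta_{i,j}$ in the non-uniform family $\mh_2$, and confirming the nonemptiness claims used to rule out the case $i\notin T,\ j\in T$ and to guarantee $\mathcal{T}\neq\emptyset$.
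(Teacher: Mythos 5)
Your proof is correct, but it takes a genuinely different route from the paper's. The paper decomposes $\mf$ by profile and by trace \emph{outside} $X_l$: for each $\dr\in\mr$ and each $R\in\fr(l)$ it considers the link $\mg_R\subset\binom{X_l}{r_l}$, invokes Tokushige's theorem on cross $t$-intersecting families (reference [CROSS], via a pair $R_0,S_0$ with $R_0\cap S_0=T_0\setminus X_l$) to force each $\mg_R$ to be a full star in $\binom{X_l}{r_l}$, then uses Lemma \ref{KCON} to walk through $\prod_{m\neq l}KG(n_m-t_m,r_m-t_m)$ and show all these stars share one center, and finally glues different profiles by a disjointness argument forcing equal centers. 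You instead decompose at the pair $\{i,j\}$ via the classical compression-link identities, reduce the only nontrivial case ($i\in T$, $j\notin T$) to a two-coloring $\mf(\{i\})\sqcup\mf(\{j\})$ of the full link $\mathcal{T}$ with no edge of the ``meet exactly in $W$'' graph crossing the partition, and conclude by connectivity of that graph -- again from Lemma \ref{KCON} within each profile, plus cross-profile edges. The net effect: you eliminate the dependence on [CROSS] entirely, replacing an eigenvalue-method theorem by an elementary connectivity argument, at the cost of a slightly longer case analysis on $T\cap\{i,j\}$; the paper's route gets the star structure of each link in one stroke from a strong known result and identifies the center $T_1=(T_0\setminus X_l)\cup T_l'$ explicitly. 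Both arguments use the size hypothesis $n_m>2(t+1)b_m$ in the same two ways (Kneser factors satisfying $g_m>2h_m$, and existence of pairs disjoint outside the prescribed set), and both share the same unaddressed degenerate detail: when some Kneser factor has $h_m=0$ it is a single vertex, and one should discard those constant coordinates before applying Lemma \ref{KCON} (you flag this; the paper silently has the identical issue for coordinates with $r_m=t_m$). Your closing cardinality step is fine since $|\mf|=|\Delta_{i,j}(\mf)|=\left|\{F\in\mh_2:T\subset F\}\right|=\left|\{F\in\mh_2:T'\subset F\}\right|$, the last equality because transposing $i$ and $j$ preserves $\mh_2$.
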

\begin{proof}
	For $\dr=(r_1,\dots,r_p)\in\mr$, let $\fr$ denote $\mf\cap\binom{X_1,\dots,X_p}{r_1,\dots,r_p}$ in the rest of the paper. Write
	$$\fr(l):=\{F\bs X_l: F\in\fr\}.$$
	For each $R\in\fr(l)$, let
	$$\mg_R:=\left\{R'\in\binom{X_l}{r_l}:R\cup R'\in\fr\right\}.$$
 Observe that
	\begin{equation}\label{231}
	\fr=\bigcup_{R\in\fr(l)}\{R\cup R': R'\in\mg_R\},\ \Delta_{i,j}(\fr)=\bigcup_{R\in\fr(l)}\{R\cup R'': R''\in\Delta_{i,j}(\mg_R)\}.
	\end{equation}
	By assumption, there exists $T_0\in\binom{X}{t}$ such that $\Delta_{i,j}(\mf)=\{F\in\mh_2: T_0\subset F\}$, which implies that \begin{equation}\label{fr}
	\Delta_{i,j}(\fr)=\left\{F\in\binom{X_1,\dots,X_p}{r_1,\dots,r_p}: T_0\subset F\right\}.
	\end{equation} 
	We have $|\mg_R|=|\Delta_{i,j}(\mg_R)|=\binom{n_l-t_l}{r_l-t_l}$, where $t_l:=|T_0\cap X_l|$.
	
	If $T_0\cap X_l=\emptyset$, we get $\mg_R=\Delta_{i,j}(\mg_R)$ from $\Delta_{i,j}(\mg_R)=\binom{X_l}{r_l}$. By (\ref{231}), $\fr=\Delta_{i,j}(\fr)$. Hence $\mf=\Delta_{i,j}(\mf)$, as desired.
	
	Now suppose $T_0\cap X_l\neq\emptyset$. By (\ref{fr}), we have 
	$$\fr(l)=\{G\subset X\bs X_l: T_0\bs X_l\subset G,\ |G\cap X_m|=r_m,\ m\in[p]\bs\{l\}\}.$$
	Note that $n_m>2(t+1)r_m$ for any $m\in[p]$. Then given $R_0\in\fr(l)$, there exists $S_0\in\fr(l)$ such that $R_0\cap S_0=T_0\bs X_l$. Since $\fr$ is $t$-intersecting, $\mg_{R_0}$ and $\mg_{S_0}$ are cross $t_l$-intersecting families with $|\mg_{R_0}||\mg_{S_0}|=\binom{n_l-t_l}{r_l-t_l}^2$. By Theorem 1 in \cite{CROSS}, we get
	$$\mg_{R_0}=\mg_{S_0}=\left\{G\in\binom{X_l}{r_l}: T'_l\subset G\right\}$$ for some $T'_l\in\binom{X_l}{t_l}$. Next we prove $\mg_S=\mg_{R_0}$ for any $S\in\fr(l)\bs\{R_0\}$.
	
	For each $S\in\fr(l)$, we have $|(S\setminus T_0)\cap X_m|=r_m-t_m$, $m\in[p]\setminus\{l\}$. Thus the set $\{R\setminus T_0:R\in\fr(l)\}$ can be seen as the vertex set of the graph $\prod_{m\in[p]\setminus\{l\}}KG(n_m-t_m,r_m-t_m)$. Notice that $n_m-t_m>2(r_m-t_m)$. Suppose $S\neq R_0$. By Lemma \ref{KCON}, this graph contains a walk
	$$R_0\setminus T_0,A_1,\dots,A_{z}=S\setminus T_0.$$
	Let $B_0=R_0$, $B_1=A_1\cup(T_0\setminus X_l)$, $\dots$, $B_z=S\in\fr(l)$. Then $B_{q}\cap B_{q+1}=T_0\setminus X_l$ for $q=0,1,\dots,z-1$. 	Consequently $\mg_{R_0}=\mg_{B_1}=\dots=\mg_S$. 
	
	For any $R\in\fr(l)$, $\mg_R$ is the collection of all $r_l$-subsets of $X_l$ containing $T_l'$. Hence
	\begin{equation}\label{jg}\begin{aligned}\fr&=\left\{R\cup R': R\in\fr(l),\ T_l'\subset R'\in\binom{X_l}{r_l}\right\}\\
	&=\left\{F\in\binom{X_1,\dots,X_p}{r_1,\dots,r_p}: T_1\subset F\right\},\end{aligned}\end{equation}
	where $T_1:=(T_0\setminus X_l)\cup T_l'$.
	
	For $\ds=(s_1,\dots,s_p)\in\mr$, by (\ref{jg}), there exists $T_2\in\binom{X}{t}$ such that $\fs$ is the collection of all sets in $\binom{X_1,\dots,X_p}{s_1,\dots,s_p}$ containing $T_2$. Since $n_m>2(t+1)b_m$ for any $m\in[p]$, there are $F_1\in\fr$ and $F_2\in\fs$ such that $(F_1\bs T_1)\cap(F_2\bs T_2)=\emptyset$. Then $t\le|F_1\cap F_2|=|T_1\cap T_2|\le t$, which implies that $T_1=T_2$. Thus for any $\ds\in\mr$, $\fs$ is the collection of all sets in $\binom{X_1,\dots,X_p}{s_1,\dots,s_p}$ containing $T_1$, which implies that the desired result follows.
\end{proof}
\section{Proof of main results}

In this section, we shall prove our main results.

Let $\mf\subset\mh_2$ be a $t$-intersecting family. If $\mf=\emptyset$, there is nothing to prove. So suppose that $\mf\neq\emptyset$. Besides, according to Lemma \ref{L3}, we may assume that $\mf$ is $l$-shifted for any $l\in[p]$. 

Recall that $b_i=\max\limits_{(r_1,\dots,r_p)\in\mr}r_i$ for $i=1,\dots,p$. Write
$$K:=\bigcup_{i=1}^pQ_i(2b_i-1),\ \alpha(\mf):=\min_{F\in\mf}|F\cap K|.$$ 
We have $\alpha(\mf)\ge t$. Indeed, since two non-empty subfamilies $\fr$ and $\fs$ are cross $t$-intersecting and $l$-shifted for any $l\in[p]$, by Lemma \ref{L2} we get
\begin{equation}\label{L3E}
|F\cap K|\ge\sum_{i=1}^p|F\cap G\cap Q_i(2b_i-1)|\ge t,
\end{equation}
where $F\in\fr$ and $G\in\fs$.

\begin{lem}\label{t}
	Suppose $\mf\subset\mh_2$ is a $t$-intersecting family. If $\alpha(\mf)=t$ and $\mf$ is $l$-shifted for any $l\in[p]$, then 
	\begin{equation}\label{TR}
	\begin{aligned}
	|\mf|\le\max_{\substack{t_1+\cdots+t_p=t\\ t_1,\dots,t_p\in\mathbb{N}}}\sum_{(r_1,\dots,r_p)\in\mr}\prod_{i\in[p]}\binom{n_i-t_i}{r_i-t_i}.
	\end{aligned}
	\end{equation}
	Moreover, when the equality holds, $\mf$ is a full $t$-star in $\mh_2$.
\end{lem}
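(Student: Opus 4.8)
The plan is to extract a single fixed $t$-set $T$ that lies inside every member of $\mf$, which will show that $\mf$ is contained in one full $t$-star; the bound and the equality statement then follow by a direct count. Since $\alpha(\mf)=t$, I will fix $F_0\in\mf$ attaining the minimum, i.e.\ $|F_0\cap K|=t$, and set $T:=F_0\cap K$, so that $|T|=t$ and $T\subset K$. Writing $t_i:=|T\cap X_i|$ gives a decomposition $t_1+\cdots+t_p=t$ with each $t_i\in\mathbb{N}$, which is exactly the data appearing in the maximum on the right-hand side of (\ref{TR}).

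The key step is to prove $T\subset F$ for every $F\in\mf$, and this is where Lemma~\ref{L2} does the work. Suppose $F\in\fr$ and recall $F_0\in\fs$ for some $\dr,\ds\in\mr$. First I would observe that $\fr$ and $\fs$ inherit the needed hypotheses from $\mf$: the operation $\Delta_{i,j}$ with $i,j\in X_l$ preserves each part-size $|\cdot\cap X_m|$, so $l$-shiftedness of $\mf$ passes to every fixed-type subfamily, and $\fr,\fs$ are cross $t$-intersecting as subfamilies of the $t$-intersecting family $\mf$. Since $r_i,s_i\le b_i$ we have $Q_i(r_i+s_i-1)\subseteq Q_i(2b_i-1)$ and the size requirement $n_i>r_i+s_i-1$ of Lemma~\ref{L2} holds. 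Applying the lemma with $A=F$, $B=F_0$ yields the chain
$$t\le\sum_{i=1}^p|F\cap F_0\cap Q_i(r_i+s_i-1)|\le\sum_{i=1}^p|F\cap F_0\cap Q_i(2b_i-1)|=|F\cap F_0\cap K|=|F\cap T|\le t,$$
where the penultimate equality uses $F_0\cap K=T$ and the fact that $K$ is the disjoint union of the windows $Q_i(2b_i-1)$. Hence $|F\cap T|=t=|T|$, so $T\subset F$.

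With this in hand $\mf\subseteq\{F\in\mh_2:T\subset F\}$, and I would count this full $t$-star type by type: for each $\dr\in\mr$ the sets $F\in\binom{X_1,\dots,X_p}{r_1,\dots,r_p}$ with $T\subset F$ number $\prod_{i\in[p]}\binom{n_i-t_i}{r_i-t_i}$ (with the usual convention that the factor is $0$ when $r_i<t_i$). Summing gives
$$|\mf|\le\sum_{(r_1,\dots,r_p)\in\mr}\prod_{i\in[p]}\binom{n_i-t_i}{r_i-t_i}\le\max_{\substack{t_1+\cdots+t_p=t\\ t_1,\dots,t_p\in\mathbb{N}}}\sum_{(r_1,\dots,r_p)\in\mr}\prod_{i\in[p]}\binom{n_i-t_i}{r_i-t_i},$$
which is (\ref{TR}). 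If equality holds in (\ref{TR}), both displayed inequalities are tight, so $\mf$ has the same cardinality as the full $t$-star $\{F\in\mh_2:T\subset F\}$ containing it; therefore $\mf$ equals this star and is a full $t$-star in $\mh_2$.

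The main point to get right will be the verification that Lemma~\ref{L2} genuinely applies to the pair $(\fr,\fs)$ — that restricting $\mf$ to fixed-type subfamilies preserves both $l$-shiftedness and the cross $t$-intersecting property, and that the window $Q_i(r_i+s_i-1)$ embeds into $K$ so the middle inequality in the chain is valid. Conceptually, the crux is recognizing that $\alpha(\mf)=t$ together with shiftedness localizes the required $t$-fold intersection entirely inside $K$, forcing it to be realized exactly on the fixed core $T=F_0\cap K$; everything after that is bookkeeping, and the size condition $n_i>r_i+s_i-1$ is immediate from the standing hypothesis $n_i>2(t+1)b_i$.
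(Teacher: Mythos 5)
Your proof is correct and takes essentially the same approach as the paper: both fix $F_0$ with $|F_0\cap K|=t$, use Lemma~\ref{L2} (through the windows $Q_i(r_i+s_i-1)\subset Q_i(2b_i-1)$) to conclude that $T=F_0\cap K$ is contained in every member of $\mf$, and then bound $|\mf|$ by counting the star type by type, with the equality case read off from the containment $\mf\subseteq\{F\in\mh_2:T\subset F\}$. The only difference is organizational: the paper packages your key chain of inequalities as display (\ref{L3E}) in the section preamble, whereas you verify the hypotheses of Lemma~\ref{L2} (inherited shiftedness and cross $t$-intersection of $\fr$, $\fs$) explicitly inside the proof.
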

\begin{proof}
	By assumption, there exists $F_0\in\mf$ such that $|F_0\cap K|=t$. By (\ref{L3E}), for any $G\in\mf$, we have
	\begin{equation}\label{S}
	F_0\cap K=\bigcup_{i\in[p]}\left(F_0\cap G\cap Q_i(2b_i-1)\right)\subset G.
	\end{equation}  
	Therefore, for any $\dr=(r_1,\dots,r_p)\in\mr$,
	$$|\fr|\le\prod_{i\in[p]}\binom{n_i-|F_0\cap Q_i(2b_i-1)|}{r_i-|F_0\cap Q_i(2b_i-1)|}.$$
	Then (\ref{TR}) follows from $|\mf|=\sum_{\dr\in\mr}|\fr|$. 
	
	By (\ref{S}), $\mf$ is a collection of some sets in $\mh_2$ containing $F_0\cap K$. So when the equality in (\ref{TR}) holds, $\mf$ is a full $t$-star in $\mh_2$.
	\end{proof}

For positive integers $t,p,n_1,\dots,n_p,k_1,\dots,k_p$ with $n_i>k_i$ and $k_1+\cdots+k_p\ge t$, write
$$\gg=\max_{\substack{t_1+\cdots+t_p=t\\ t_1,\dots,t_p\in\mathbb{N}}}\prod_{i\in[p]}\binom{n_i-t_i}{k_i-t_i}.$$
	\begin{proof}[\bf Proof of Theorem \ref{T1}]
		Notice that $\mh_1$ is a special case of $\mh_2$. In view of Lemma \ref{t}, we show that
		$$|\mf|<\gg$$ 
	when  $\alpha(\mf)\ge t+1$.
		 For convenience, if there is no confusion, we replace $\alpha(\mf)$ with $\alpha$ in the following.  
		
		By assumption, there exists $A_0\in\mf$ such that $|A_0\cap K|=\alpha$. Then for $F\in\mf$, we have $|F\cap K|\ge\alpha$ and $|F\cap K\cap A_0|\ge t$ by (\ref{L3E}). Thus
		\begin{equation}\label{E5}
		\mf\subset\bigcup_{J\in\binom{K}{\alpha},\ |J\cap A_0|\ge t}\left\{F\in\mh_1: J\subset F\right\}.
		\end{equation}
		
		Let $N$ be the collection of all non-negative integer solutions of the equation $x_1+\dots+x_p=\alpha-t$. For each $H\in\binom{K\cap A_0}{t}$ and $\beta=(c_1,\dots,c_p)\in N$, let $\mj(H,\beta)$ be the set of all $J\in\binom{K}{\alpha}$ with $H\subset J$ and $|(J\bs H)\cap X_i|=c_i$. Denote the number of $F\in \mh_1$ containing at least one element of $\mj(H,\beta)$ by $f(H,\beta)$. For each $J\in\binom{K}{\alpha}$ satisfying $|J\cap A_0|\ge t$, observe that $J$ is an element of some $\mj(H,\beta)$. Then by (\ref{E5}), we have
		\begin{equation*}\label{FHB}
		|\mf|\le\sum_{H\in\binom{K\cap A_0}{t}}\sum_{\beta\in N}f(H,\beta).
		\end{equation*}
		Observe that 
		$$|\mj(H,\beta)|\le\prod_{i\in[p]}\binom{2k_i-1}{c_i}\le\prod_{i\in[p]}(2k_i)^{c_i}.$$
		Thus
			\begin{equation*}\label{FHB2}
			\dfrac{f(H,\beta)}{\gg}\le\dfrac{\left(\prod\limits_{i\in[p]}(2k_i)^{c_i}\right)\cdot\left(\prod\limits_{i\in[p]}\binom{n_i-|H\cap X_i|-c_i}{k_i-|H\cap X_i|-c_i}\right)}{\prod\limits_{i\in[p]}\binom{n_i-|H\cap X_i|}{k_i-|H\cap X_i|}}\le\prod_{i\in[p]}\left(\dfrac{2k_i^2}{n_i}\right)^{z_i},
	\end{equation*}
	where $(z_1,\dots,z_p)\in N$ such that  $$\prod\limits_{i\in[p]}\left(\dfrac{2k_i^2}{n_i}\right)^{z_i}=\max\limits_{(c_1,\dots,c_p)\in N}\prod\limits_{i\in[p]}\left(\dfrac{2k_i^2}{n_i}\right)^{c_i}.$$
	 Note that $|N|=\binom{\alpha-t+p-1}{p-1}$ and $$\binom{x}{y}=\prod_{i=y+1}^x(1+\dfrac{y}{i-y})\le(y+1)^{x-y}$$ for any positive integers $x,y$ with $x\ge y+1$. By above discussion, we obtain
		\begin{align*}
		\dfrac{|\mf|}{g_{t,p}(n_1,\dots,n_p;k_1,\dots,k_p)}&\le\binom{\alpha}{t}\binom{\alpha-t+p-1}{p-1}\cdot\prod_{i\in[p]}\left(\dfrac{2k_i^2}{n_i}\right)^{z_i}\\
		&\le\left((t+1)p\right)^{\alpha-t}\cdot\prod_{i\in[p]}\left(\dfrac{2k_i^2}{n_i}\right)^{z_i}\\
		&=\prod_{i\in[p]}\left(\dfrac{2(t+1)pk_i^2}{n_i}\right)^{z_i}.
		\end{align*}
		 Since $n_i>2(t+1)pk_i^2$ for any $i\in[p]$, we have $|\mf|<\gg$, as desired.

		 For each $S\in\binom{X}{t}$, write 
		 $$\mp(S):=\{(i,j(i))\in\mathbb{Z}^2: i\in[p],\ 0\le j(i)<|S\cap X_i|\}.$$
		 Observe that
		 \begin{equation}\label{ES}
		 	e(S):=\dfrac{\prod_{i\in[p]}\binom{n_i-|S\cap X_i|}{k_i-|S\cap X_i|}}{\prod_{i\in[p]}\binom{n_i}{k_i}}=\prod_{(i,j)\in\mp(S)}\dfrac{k_i-j}{n_i-j}.
		 	\end{equation} 
		Let $T$ be a $t$-subset of $X$. To finish the proof, it is sufficient to show that $e(T)=\max_{S\in\binom{X}{t}}e(S)$ if and only if (\ref{ES2}) holds for any $i\in[p]$ whenever $|T\cap X_j|\ge1$. 
		
		Suppose that (\ref{ES2}) holds for any $i\in[p]$ whenever $|X\cap T_j|\ge1$. For each $S\in\binom{X}{t}\bs\{T\}$, from $$\dfrac{k_i}{n_i}>\dfrac{k_i-1}{n_i-1}>\cdots>\dfrac{1}{n_i-k_i+1},$$ we get
	 \begin{equation}\label{ES1}
	\min_{(i,j)\in\mp(T)\bs\mp(S)}\dfrac{k_i-j}{n_i-j}\ge\max_{(i,j)\in\mp(S)\bs\mp(T)}\dfrac{k_i-j}{n_i-j}.
	\end{equation} 
		By (\ref{ES}) and (\ref{ES1}), we have $e(T)/e(S)\ge1$. On the other hand, suppose $e(T)=\max_{S\in\binom{X}{t}}e(S)$. For each $i,j$ with $|T\cap X_j|\ge1$, let $T':=(T\bs\{u\})\cup\{v\}\in\binom{X}{t}$, where $u\in T\cap X_j$ and $v\in X_i\bs T$. By (\ref{ES}), we have
		$$\frac{k_i-|T\cap X_i|}{n_i-|T\cap X_i|}=\dfrac{e(T')}{e(T)}\cdot\dfrac{k_j-|T\cap X_j|+1}{n_j-|T\cap X_j|+1}\le\dfrac{k_j-|T\cap X_j|+1}{n_j-|T\cap X_j|+1}.$$
	Hence the desired result holds.	
\end{proof}

	It is not intuitive to find $T\in\binom{X}{t}$ such that the size of $\{F\in\mh_1: T\subset F\}$ is $\gg$. Thus we extract an algorithm about how to find all $|T\cap X_i|$ from the proof of Theorem \ref{T1}.
	\medskip
		\begin{breakablealgorithm}
		\caption{} \label{SF}
		\begin{algorithmic}[1]
			\STATE {\bf Input}\qquad $t,p,k_1,\dots,k_p,n_1,\dots,n_p$
			\STATE Let $A$ be the collection of $\dfrac{k_i-j}{n_i-j}$ for all $i$, $j$ with $i\in[p]$, $j=0,\dots,k_i-1$
			\STATE Sort $A$ in decreasing order $a_1,a_2,\dots$
			\STATE Let $A(f)$ be the collection of $(i,j)$ satisfying $\dfrac{k_i-j}{n_i-j}=f$ for $f\in A$
			\STATE Put $i\leftarrow1$, $c\leftarrow0$, $k\leftarrow0$, $G\leftarrow\emptyset$
			\WHILE{$k<t$}
			\STATE $k\leftarrow k+|A(a_i)|$
			\IF{$k\le t$}
			\STATE $G\leftarrow G\cup A(a_i)$
			\ELSE
			\STATE $c\leftarrow|A(a_i)|-k+t$
			\STATE $H\leftarrow\binom{A(a_i)}{c}$
			\ENDIF
			\STATE $i\leftarrow i+1$
			\ENDWHILE
			\IF{$c=0$}
			\FOR{$t_m$}
			\STATE $t_m\leftarrow|\{(m,j): (m,j)\in G\}|$
			\ENDFOR
			\STATE {\bf Output}\qquad $t_1,\ldots,t_p$
			\ELSE
			\FOR{$L\in H$}
			\STATE $J\leftarrow G\cup L$
			\FOR{$t_m$}
			\STATE $t_m\leftarrow|\{(m,j): (m,j)\in J\}|$
			\ENDFOR
			\STATE {\bf Output}\qquad $t_1,\ldots,t_p$
			\ENDFOR
			\ENDIF
		\end{algorithmic}
	\end{breakablealgorithm}
\medskip
\begin{proof}[\bf Proof of Theorem \ref{T2}]
	In consideration of Lemma \ref{t}, it is sufficient to show that
	\begin{equation*}\label{<}
	|\mf|<\max_{\substack{t_1+\cdots+t_p=t\\ t_1,\dots,t_p\in\mathbb{N}}}\sum_{(r_1,\dots,r_p)\in\mr}\prod_{i\in[p]}\binom{n_i-t_i}{r_i-t_i}
	\end{equation*}
 when $\alpha(\mf)\ge t+1$. W.o.l.g., suppose that $n_1=\min_{i\in[p]}n_i$.
	
	We may assume that $\fr\neq\emptyset$ for some $\dr=(r_1,\dots,r_p)\in\mr$, otherwise there is nothing to prove. Observe that $\fr$ is $t$-intersecting and $\alpha(\fr)\ge\alpha(\mf)\ge t+1$. From the proof of Theorem \ref{T1}, we get
	\begin{equation}\label{bt1}\dfrac{|\fr|}{\gr}\le\prod_{i\in[p]}\left(\dfrac{2(t+1)pr_i^2}{n_i}\right)^{w_i}\le\prod_{i\in[p]}\left(\dfrac{2(t+1)pb_i^2}{n_i}\right)^{q_i}, 	\end{equation}
	where $w_1+\cdots+w_p=\alpha(\fr)-t$ and $q_1+\cdots+q_p=\alpha(\mf)-t$. Notice that there exist non-negative integers $d_1,\dots,d_p$ with $d_1+\cdots+d_p=t$ such that
	\begin{equation}\label{bt2}
	\begin{aligned}
	\dfrac{\gr}{\binom{n_1-t}{r_1-t}\cdot\prod_{i=2}^p\binom{n_i}{r_i}}&=\left( \prod_{i\in[p]}\left(\prod_{j=0}^{d_i-1}\dfrac{r_i-j}{n_i-j} \right)\right) \cdot\left( \prod_{j=0}^{t-1}\dfrac{n_1-j}{r_1-j}\right)\le b^t.
	\end{aligned}
	\end{equation}
	Combining (\ref{bt1}) and (\ref{bt2}), we derive
	\begin{align*}
	\dfrac{|\fr|}{\binom{n_1-t}{r_1-t}\cdot\prod_{i=2}^p\binom{n_i}{r_i}}&\le b^t\cdot\prod_{i\in[p]}\left(\dfrac{2(t+1)pb_i^2}{n_i}\right)^{q_i}\le\left(\dfrac{2(t+1)pb^{t+2}}{n_1}\right)^{\alpha-t}<1
	\end{align*}
	from $n_1>2(t+1)pb^{t+2}$. Therefore, $|\mf|$ is smaller than the number of sets in $\mh_2$ containing $[t]$, which implies that the desired result follows.
\end{proof}
\section*{Acknowledgement}

This research is supported by NSFC (11671043) and NSF of Hebei Province (A$2019205092$).

\end{document}